%
\documentclass[a4paper,11pt]{amsart}
\usepackage{amsmath,amssymb,amsfonts}
\usepackage{algorithmic}
\usepackage{algorithm}

\usepackage{amssymb}
\usepackage{amsfonts}

\setcounter{MaxMatrixCols}{10}


\newtheorem{theorem}{Theorem}

\newtheorem{claim}{Claim}

\newtheorem{conjecture}{Conjecture}
\begin{document}
\title[Nonrepetitive sequences]{Nonrepetitive sequences on arithmetic
progressions}
%
%
\author{ Jaros\L aw Grytczuk}
\address{
Department of Theoretical Computer Science, Jagiellonian University, Krak\'ow,
Poland and Faculty of Mathematics and Information Science, Warsaw University of
Technology, Warszawa, Poland}
\email{grytczuk@tcs.uj.edu.pl}

\author{Jakub Kozik}

\address{
Department of Theoretical Computer Science, Jagiellonian University, Krak\'ow,
Poland}
\email{jkozik@tcs.uj.edu.pl}

\author{Marcin Witkowski}

\address{Faculty of  Mathematics and Computer Science,
Adam Mickiewicz University, Pozna\'n, Poland}
\email{mw@amu.edu.pl}


\begin{abstract}
A sequence $S=s_{1}s_{2}\ldots s_{n}$ is \emph{nonrepetitive} if no two adjacent blocks of $S$ are identical. In 1906 Thue proved that there exist arbitrarily long nonrepetitive sequences over $3$-element set of symbols. We study a generalization of nonrepetitive sequences involving arithmetic progressions. We prove that for every $k\geqslant 1$, there exist arbitrarily long sequences over at most $2k+10 \sqrt{k}$ symbols whose subsequences, indexed by arithmetic progressions with common differences from the set $\{1,2,\ldots ,k\}$, are nonrepetitive. This improves a previous bound obtained in \cite{Grytczuk Rainbow}. Our approach is based on a technique introduced recently in \cite{GrytczukKozikMicek}, which was originally inspired by a constructive proof of the Lov\'{a}sz Local Lemma due to Moser and Tardos \cite{MoserTardos}. We also discuss some related problems that can be  attacked by this method.
\end{abstract}

\keywords{nonrepetitive sequence, randomized algorithm, the Lov\'{a}sz Local
Lemma.}
\thanks{Research of J. Grytczuk is supported by the Polish Ministry of Science and Higher Education grant (MNiSW) (N N206 257035). \\
Research of J. Kozik is supported by the Polish Ministry of Science and Higher Education grant (MNiSW) (N206 3761 375).}
\maketitle

\section{Introduction}

For a sequence $S=s_{1}s_{2}\ldots s_{n}$ a \emph{repetition} of
size $h$ is a block (subsequence of consequtive terms) of the form  $XX=x_{1}\ldots x_{h}x_{1}\ldots x_{h}$.
A sequence is \emph{nonrepetitive} if it does not contain a
repetition of any size $h\geqslant 1$. For example, the sequence $1231312$
contains a repetition $3131$ of size two, while $123132123$ is nonrepetitive. 

It is easy to see that the longest nonrepetitive sequence, which can be constructed over a set of two symbols, has length three.
In 1906 Thue \cite{Thue1}
proved, by a remarkable inductive construction, that there exist arbitrarily
long nonrepetitive sequences over just three different symbols (see also 
\cite{BerstelTrans}, \cite{BerstelWork}). This discovery resulted in many 
unexpected applications inspiring a stream of research and
leading to the emergence of new branches of mathematics with a variety of
challenging open problems (see \cite{AlloucheShallit}, \cite{Beanetal}, \cite%
{CurriePattern}, \cite{GrytczukMiki}, \cite{Lothaire1}).

One particular variant, proposed in \cite{Beanetal}, concerns \emph{%
nonrepetitive tilings}, i.e., assignments of symbols to lattice points of
the plane so that all lines in prescribed directions are nonrepetitive. This idea led Currie and Simpson \cite{CurrieSimpsson}\ to consider sequences
with a stronger property: all subsequences taken over arithmetic
progressions of bounded common differences are nonrepetitive. Let $%
k\geqslant 1$ be a fixed positive integer and let $S(k)$ be the family of
subsequences of $S$ of the form $s_{i}s_{i+d}s_{i+2d}\ldots s_{i+td}$ with $%
d\in \{1,2,\ldots ,k\}$, $1\leqslant i\leqslant k-1$, $t=\left\lfloor
n/d\right\rfloor $. If every element of $S(k)$ is a nonrepetitive sequence,
then $S$ is called \emph{nonrepetitive up to }$\bmod \ k$ (see \cite%
{CurrieSimpsson}). Let $M(k)$ denote the minimal number of symbols needed to
create arbitrarily long sequences nonrepetitive up to $\bmod \ k$. 
Thue's theorem can be rephrased as $M(1)=3$. It is easy to see that $%
M(k)\geqslant k+2$ for every $k\geqslant 1$, and one may suspect that
equality always holds.

\begin{conjecture}
$M(k)=k+2$ for every $k\geqslant 1$.
\end{conjecture}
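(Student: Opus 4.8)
The plan is to split the equality into the two matching bounds $M(k)\geqslant k+2$ and $M(k)\leqslant k+2$, and to dispatch the lower bound first since it is the easy half. Suppose $S$ is nonrepetitive up to $\bmod\ k$ and uses at most $k+1$ symbols. For every $d\in\{1,\ldots,k\}$ and every position $i$, the symbols $s_i$ and $s_{i+d}$ occur as two consecutive entries of some progression in $S(k)$, so the prohibition of a repetition of size one forces $s_i\neq s_{i+d}$. Letting $d$ range over $\{1,\ldots,k\}$, any two positions at distance at most $k$ receive distinct symbols, and hence every block of $k+1$ consecutive terms is a permutation of the entire alphabet. Two consecutive such blocks overlap in $k$ positions and are permutations of the same alphabet, which forces $s_{i+k+1}=s_i$ for all $i$, so $S$ is periodic with period $k+1$. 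Reading $S$ along $d=1$ then exhibits the repetition $XX$ with $X=s_1\cdots s_{k+1}$ as soon as $S$ has length at least $2(k+1)$. Since we demand arbitrarily long sequences, $k+1$ symbols cannot suffice, giving $M(k)\geqslant k+2$.

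The whole difficulty lies in the reverse inequality $M(k)\leqslant k+2$, which is exactly the content of the conjecture and which the present paper attains only in the weaker form $M(k)\leqslant 2k+10\sqrt{k}$. My first line of attack would be to push the entropy-compression scheme used here — itself modelled on the Moser and Tardos analysis of the Local Lemma — down to the exact threshold. One runs a randomized process that appends uniformly random symbols from a $(k+2)$-letter alphabet and backtracks whenever a forbidden \emph{arithmetic} repetition appears; termination with positive probability is certified by encoding the execution log and comparing its length to the number of random bits consumed. The reason this currently costs about $2k$ rather than $k+2$ symbols is that the log must record, separately for each common difference $d$, both the length and the location of the square being destroyed, and the naive counting treats these $k$ contributions as essentially independent. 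The refinement I would aim for is to show that a repetition along one progression severely constrains the possible repetitions along the others, so that the per-$d$ costs overlap heavily and the effective branching collapses from $\approx 2k$ to $k+O(1)$.

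Failing a purely probabilistic sharpening, I would look for an explicit algebraic construction over a group or field of order $k+2$ — for instance a suitable sequence in $\mathbb{Z}_{k+2}$, or the fixed point of a uniform morphism — and verify the absence of arithmetic-progression squares directly. The appeal is that an explicit sequence can be analysed one difference $d$ at a time, and the rigidity exposed in the lower bound (every window of length $k+1$ being a permutation) suggests that the extremal objects, if they exist, are highly structured and therefore plausibly describable in closed form.

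The main obstacle is precisely the constant-factor gap between $2k$ and $k+2$. Near this threshold the probabilistic method is provably lossy: the Local Lemma cannot certify existence once the alphabet barely exceeds the forbidden-structure degree, so the entropy argument would have to be replaced or substantially augmented by genuine cancellations among the $d$-constraints. Conversely, no explicit morphism is known to meet the bound even for small $k\geqslant 2$, and treating one constraint family at a time has historically not sufficed because the families interact. It is this interaction — making the bound tight for all $d$ simultaneously with an alphabet of the minimum size — that keeps the statement a conjecture rather than a theorem, and I expect any successful proof to hinge on a new combinatorial relation tying repetitions across different common differences together.
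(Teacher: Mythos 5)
Your first paragraph is correct and complete as a proof of the lower bound: the size-one constraints for every $d\in\{1,\ldots,k\}$ force any $k+1$ consecutive terms to be pairwise distinct, so with an alphabet of exactly $k+1$ symbols every window of length $k+1$ is a permutation of the alphabet; the overlap of consecutive windows then forces $s_{i+k+1}=s_i$, and the resulting period-$(k+1)$ structure produces the square $XX$ with $X=s_1\cdots s_{k+1}$ along $d=1$ once the length reaches $2(k+1)$. This is exactly the content behind the paper's remark that ``it is easy to see that $M(k)\geqslant k+2$.''

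The genuine gap is everything else: the statement you were asked to prove is an equality, and you have not proved $M(k)\leqslant k+2$ --- nor does the paper, which poses it explicitly as a conjecture, verified only for $k=2,3,5$ (open even for $k=4$), the best general bound being the paper's Theorem~1, $M(k)\leqslant 2k+10\sqrt{k}$. Your second and third paragraphs are research programmes, not arguments; no step of either is carried out, so the upper bound remains entirely missing. Moreover, your diagnosis of why the entropy-compression method stalls near $2k$ is inaccurate: in the paper's encoding, recording the common difference $d$ of the erased square contributes only a factor $k^{M/2}$, i.e.\ about $\sqrt{k}$ per random choice, which is exactly what the $10\sqrt{k}$ surplus absorbs --- the per-$d$ bookkeeping is \emph{not} the bottleneck. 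The dominant $2k$ term comes from line (3) of the algorithm, where the symbols at the $2k$ positions within distance $k$ are deterministically excluded so that repetitions of size one never arise (equivalently, so that the route $R$ contains no up--down--up pattern and the number of peaks is at most $M/2$). Any sharpening toward $k+O(1)$ would therefore have to weaken this neighborhood exclusion while still controlling short repetitions, a difficulty your plan does not address. As it stands, the proposal establishes only the easy inequality $M(k)\geqslant k+2$, and the conjecture remains open.
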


This conjecture has been confirmed so far only for $k=2$, $3$, and $5$ (it
is not even  known for $k=4$) by providing Thue type constructions of
the desired sequences. However, using the Lov\'{a}sz Local Lemma (see \cite%
{AlonSpencer}) it was proved in \cite{Grytczuk Rainbow} that $M(k)\leqslant
e^{33}k$  for any $k$. In this paper we improve the last bound
substantially by proving that $M(k)\leqslant 2 k+O(\sqrt{k})$. Our method is
inspired by the recent constructive proof of the Lov\'{a}sz Local Lemma
due to Moser and Tardos \cite{MoserTardos}. Just like in the related paper \cite%
{GrytczukKozikMicek} (and in the original nonconstructive approach),
we prove the result for a more general version where symbols are chosen from prescribed
lists (sets) assigned to the positions in a sequence.  The same method applies in the case when $K$ is any $k$-element set positive integers, and we want to construct  arbitrarily long sequence with no repetitions on arithmetic subsequences with  differences from $K$.

\section{The algorithm}

We present an algorithm that generates consecutive terms of a sequence $S$
by choosing symbols at random (uniformly and independently), and every time
a repetition occurs, it erases the longest repeated block
and continues from the smallest unassigned position. We alway erase the 
block that contains the last chosen element in order to ensure that after this removal the remaining sequence stays nonrepetitive. In the listing of the algorithms value 0 of some $s_i$ means that no symbol is assigned to $s_i$. Initially all $s_i$ equals 0.

\begin{algorithm}
\label{alg}
\caption{Choosing a sequence which is non-repetitive up to mod $k$.}
\begin{algorithmic}[1]
\STATE $i \gets 1$
\WHILE{$i \leqslant n$}
  \STATE $s_i \gets$ random element of $L_i \backslash \{s_{i-k},s_{i-k+1},...,s_{i-1},s_{i+1},...,s_{i+k} \}$ 
	\IF {$s_1,...,s_n$ is non-repetitive with respect to non-zero elements} 
  	      \STATE $i\gets$ smallest index $j$ for which $s_j=0$
	\ELSE
  	      \STATE from the set of the longest repetitions in $S$ choose
  	      $$s_{j-2h \cdot d+d},...,s_{j-h \cdot d},s_{j-h \cdot d+d},...,s_j$$
  	       with the largest index of the first element $j-2h \cdot d +d$
  	      \IF {$i \leqslant j-h \cdot d$}
  	        \STATE $m \gets j-2h \cdot d+d$
  	      \ELSE
  	        \STATE $m \gets j-h\cdot d+d$    
  	      \ENDIF
  	      \FOR{$j = 1$ \TO $h$} 
							\STATE $s_m \gets 0$
							\STATE $m \gets m+d$
					\ENDFOR					     
  \STATE $i\gets$ smallest index $j$ for which $s_j=0$
	\ENDIF
\ENDWHILE 
\end{algorithmic}
\end{algorithm}

We show that for any given positive integer $n$, and arbitrary lists of symbols $L_{i}$, each of size at least $%
2k+10\sqrt{k}$, the Algorithm computes a sequence of
length $n$ which is nonrepetitive up to mod $k$. Random
elements in line (3) of the Algorithm are chosen independently with uniform distribution. The general idea is to prove that the Algorithm cannot work forever for all possible evaluations of the random experiments. It is easy to see that the Algorithm stops only if nonrepetitive up to $\bmod$ $k$ sequence is constructed.

\begin{theorem}
\label{Theorem 1}For every positive integer $n$, and for every sequence of sets $%
L_{1},\ldots ,L_{n}$, each of size at least $2 k+10 \sqrt{k}$, there is a
sequence $S=s_{1}\ldots s_{n}$ nonrepetitive up to $\bmod$ $k$ such
that $s_{i}\in L_{i}$ for every $i=1,2,\ldots ,n$.
\end{theorem}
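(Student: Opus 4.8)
The plan is to analyze the randomized Algorithm by an entropy-compression argument in the spirit of Moser and Tardos. Since the Algorithm halts only when it has produced a sequence that is nonrepetitive up to $\bmod\ k$ (and with $s_i\in L_i$ throughout, because line (3) always draws from $L_i$), it suffices to show that with positive probability the Algorithm terminates. First I would record the one quantitative input: when $s_i$ is chosen in line (3) the forbidden set $\{s_{i-k},\dots,s_{i-1},s_{i+1},\dots,s_{i+k}\}$ has at most $2k$ elements, so the symbol is drawn uniformly from a set $A$ of size at least $|L_i|-2k\geqslant 10\sqrt{k}=:p$. Two observations drive everything: each step has at least $p$ equally likely outcomes, and excluding the $2k$ neighbours means no repetition of size $1$ can ever be created, so every repetition erased in lines (7)--(16) has size $h\geqslant 2$.

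Next I would set up an injection. Suppose the Algorithm performs $t$ iterations without halting, and let $\rho=(\rho_1,\dots,\rho_t)$ be the indices of the drawn symbols. The probability that the first $t$ draws equal a fixed $\rho$ is $\prod_j|A_j|^{-1}\leqslant p^{-t}$, and these events are disjoint over $\rho$, so
\[
\Prob[\text{run}\geqslant t]\ \leqslant\ \#\{\rho:\text{run}\geqslant t\}\cdot p^{-t}.
\]
I would then encode each such $\rho$ by a pair: a \emph{log} $\LL$ together with the partial sequence present after step $t$. For each step $\LL$ records success or failure, and for each failure the triple $(d,h,r)$ giving the common difference $d\in\{1,\dots,k\}$, the half-length $h\geqslant 2$, and the offset $r\in\{0,\dots,h-1\}$ of the just-filled position inside the erased half. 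The key claim is that $(\LL,\text{final sequence})$ determines $\rho$. One proves this in two passes: a forward pass using $\LL$ alone reconstructs the set of filled positions, and hence the working index $i$, at every step, since a success fills the current frontier and a failure, via $(d,h,r)$ and the known $i$, deletes a determined arithmetic block; then a backward pass recovers the symbols. When undoing a failure one restores the erased half by copying the surviving half (the two are equal, being the halves of a repetition $XX$), reads the symbol then sitting at position $i$ as the drawn symbol, and clears $i$; undoing a success simply reads and clears the frontier symbol. Because the state before each step is known, the drawn symbol determines its index in $A_j$, so $\rho$ is recovered and the map is injective.

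It then remains to count codes. The number of final sequences is at most $\prod_i(|L_i|+1)=:C_0$, a constant independent of $t$, so the task reduces to counting logs with $b$ failures: their positions contribute $\binom{t}{b}$, the differences $k^{b}$, and the offsets $\prod_j h_j$, subject to $h_j\geqslant 2$ and $\sum_j h_j\leqslant t$ (the latter because $t-\sum_j h_j$ equals the number of filled positions at the end, which is nonnegative). Here the bound $h_j\geqslant 2$ is decisive: it forces $b\leqslant t/2$, so the potentially dangerous factor $k^{b}$ never exceeds $k^{t/2}=(\sqrt{k})^{t}$. Carrying out the routine maximization over $b$ (using $\prod_j h_j\leqslant 2^{\,t-b}$) shows that the number of logs is at most $(c\sqrt{k})^{t}$ for an absolute constant $c<10$, whence
\[
\Prob[\text{run}\geqslant t]\ \leqslant\ C_0\,(c\sqrt{k})^{t}\,p^{-t}\ =\ C_0\,(c/10)^{t}\ \longrightarrow\ 0 .
\]
Thus the Algorithm halts with probability $1$, and a sequence as claimed exists.

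The main obstacle is precisely this final count together with the design choices behind it. One must encode failures without ever storing an absolute position, since a $\log n$ cost per failure would destroy the bound; this is why $\LL$ stores only the relative data $(d,h,r)$ and relies on the forward simulation to locate the erased blocks, and why one must check carefully that the backward pass genuinely recovers every drawn symbol. Equally, one must extract a $\sqrt{k}$ rather than a linear-in-$k$ saving, and this is exactly where the exclusion of size-$1$ repetitions enters: it yields $h\geqslant 2$, hence $b\leqslant t/2$, which is what allows lists of size $2k+10\sqrt{k}$ to suffice.
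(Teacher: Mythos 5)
Your overall strategy is exactly the paper's own (entropy compression with the same algorithm, a success/failure record playing the role of the paper's Dyck path $R$, your $d$'s and $r$'s playing the roles of the paper's sequences $D$ and $P$, plus the final sequence), but your injectivity argument has a genuine gap: your log omits the one bit per failure that the paper records in its sequence $O$, namely \emph{which half} of the repetition was erased, and without it your backward pass is ill-defined. You say one "restores the erased half by copying the surviving half," but the surviving half is not identifiable from your data. If the erased block is $E=\{i-rd,\,i-(r-1)d,\ldots,i+(h-1-r)d\}$, the matching half is either $E-hd$ or $E+hd$, and the decoder knows only the set of filled positions and their current symbols. Since $i$ is the smallest unassigned index at the moment it is filled, every position below $i$ is assigned; and the largest element of $E-hd$ is $i-(r+1)d<i$, so the left candidate $E-hd$ is \emph{always} fully assigned (whenever it lies in range). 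Hence whenever $E+hd$ is also fully assigned --- which is guaranteed when the erased half was the first half of the repetition, and entirely possible when it was the second --- the decoder sees two filled candidate blocks with possibly different contents and has no way to decide which of them equals the erased symbols. Your triple $(d,h,r)$ carries no orientation information ($r$ is an offset \emph{inside} $E$), so the map you construct has not been shown injective, and that is the heart of the argument.

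The repair is exactly the paper's $O$: one sign per failure indicating whether condition $i\leqslant j-h\cdot d$ held, i.e.\ which half was deleted. This multiplies your code count by $2^{b}\leqslant 2^{t/2}=(\sqrt{2})^{t}$, which your numerical slack absorbs: the paper's full accounting gives a rate of roughly $4\cdot\sqrt{2}\cdot 1.45\cdot\sqrt{k}\approx 8.2\sqrt{k}<10\sqrt{k}$. While fixing this, also note a smaller bookkeeping omission: you charge $\binom{t}{b}$ for the failure positions, $k^{b}$ for the differences, and $\prod_j h_j$ for the offsets, but you never count the choices of the half-lengths $h_j$ themselves; the paper packages the failure positions and the $h_j$'s together into a lattice path counted by the Catalan number $C_t\leqslant 4^{t}$. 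Again the slack in $10\sqrt{k}$ covers it, but as written your count of logs is incomplete. Everything else --- using the $2k$-exclusion to rule out repetitions of size $1$, hence $h_j\geqslant 2$ and $b\leqslant t/2$, which is where the $\sqrt{k}$ saving comes from; the position-free (relative) encoding of failures; the final-sequence factor independent of $t$ --- correctly mirrors the paper's proof.
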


\begin{proof}
Let us suppose for a contradiction that such sequence does not exists. It means that the Algorithm never stops.
We are going to count the possible sequences of random  values used in line
(3) of the algorithm in two ways. 


Let $r_{j}$, $1\leqslant j\leqslant M$, be a sequence of values chosen in the line (3) in the first $M$ choices of some run of the Algorithm . Each $r_j$ can take at least  $10 \sqrt{k}$ values. It means that there are at least $(10k)^{M/2}$ such sequences. 

The second way of counting involves descriptions of the behaviour of the Algorithm. For every fixed evaluation of the first $M$ random choices we define the following five elements:

\begin{itemize}
\item {A \emph{route $R$} on the upper right quadrant of a grid $\mathbb{Z}\times \mathbb{Z}$ from coordinate $(0,
0)$ to coordinate $(2M, 0)$ on $2M$ steps with possible moves $(1,1)$ and $%
(1,-1)$ which never goes below the axis $y=0$.} 

\item {A sequence $D$ of numbers between $1$ and $k$ corresponding to the
peaks on the route $R$, where by a \emph{peak} we mean a move $(1,1)$ followed
immediately by a move $(1,-1)$.}

\item {A sequence $O$ of numbers $-1$, or $1$ corresponding to the peaks on
route $R$.} 

\item {A sequence $P$ of integers, one for every peak, whose sum is not grater than $M$.} 

\item {A sequence $S$ produced by the Algorithm after $M$ steps.}
\end{itemize}

A pentad $(R,D,O,P,S)$ will be called a\textit{\ }\emph{log}. We
encode consecutive steps of the Algorithm into log in the following way:

Each time the algorithm executes line (3) we append a move $(1,1)$ to the route $R$ and for every execution of line (14)  we append $(1,-1)$. Notice that in line (14) the algorithm can set zero only to $%
s_{c}$ which are non-zero, therefore the number of down-steps on route $R$
never excess the number of up-steps, and it never goes below axis $y=0$. At
the end of computations we add to the route $R$ one down-step for each
element of $S$ which is non-zero. This brings us to the point $(2M,0)$.
Whenever Algorithm 1 executes line (7) we append to the sequence $D$ a
difference $d$ of the chosen longest repetition. Then, if (8) is true, we
append $1$ to the sequence $O$, otherwise we append $-1$ . 
For each execution of the loop (13)-(16) we append to the the sequence $P$,  value $j$ for which $m$ equals $i$ in the loop. Finally, $S$ is the sequence  produced by the Algorithm  after $M$ executions of line (3).

\begin{claim}
Every log corresponds to a unique sequence $r_{j}$, $1\leqslant j\leqslant M$ of the first $M$ values chosen in the line (3) in some execution of the Algorithm.
\end{claim}

\begin{proof}
For a given log $(R,D,O,P,S)$ we are going to decode $r_{1},\ldots ,r_{M}$.
At first we use information from route $R$ and sequences $D$ and $P$ to
determine which $s_{i}$ were non-zero at each step of the algorithm and to
find coordinates of elements which were zeroed at step (14) of the
Algorithm. Notice that each operation of setting a non-zero value to some $s_{i}$ corresponds to the up-step $(1,1)$ on the route $R$, while each zeroing of $s_{i}$ corresponds to some down-step $(1,-1)$ on route $R$. We examine the route $R$ from the point $(0,0)$ to the point $(2M,0)$. Assume that the
first peak occurs after $j$th step. Since this is the first time we erase some
elements $s_{i}$, we know that $s_{1},\ldots ,s_{j}$ are the only non-zero
elements at this point. Now we use information encoded in $D$ and $P$. We
look at the number of consecutive down-steps on $R$ (which in this case is
equal to $p_{1}$) and remember that for this peak we zeroed $s_{j}$, $%
s_{j-d_{1}}$, $s_{j-2d_{1}},\ldots ,s_{j-(p_{1}-1)d_{1}}$. Then again each
up-step on $R$ denote setting some value to the zeroed position
with the smallest index $i$. Proceeding in that way we know exactly which position was set last, when we reach the next peak. From the number of consecutive down-steps on $R$ we deduce the length of the zeroed repeated block. Value in the sequence $D$ corresponding to the peak denotes the difference of the arithmetic subsequence in which the repetition occured. Finally corresponding value from the sequence $P$ describe the position of the  symbol just set, within the erased repated blocks. From all this information it is easy to deduce which positions was zeroed as a result of erasing the repetition. We repeat these operations
until we get to the end of $R$.

After this preparatory step we are ready to decode $r_{1},\ldots ,r_{M}$. We consider the sequence $R$ in reverse order -- from the
last point $(2M,0)$ to the first $(0,0)$ modifying the final sequence $S$. This time we use information
encoded in $S$ and $O$, and the knowledge determined in a
preparatory step. As we said before, each up-step $(1,1)$ on the route $R$
corresponds to some $r_{i}$. For every such up-step we have already
determined the indices of elements $r_{i}$ on $S$ in the preparatory analysis. At the beginning, going
backward on $R$, there is some number of down-steps corresponding to non-zero
elements of $S$ (the elements added at the end of computations). We skip them and move on. Then, each time there is an
up-step on $R$, we assign to $r_{j}$ a value from appropriate $s_{i}$ (where $i$ was
determined in the preparatory step), and set $s_{i}$ to $0$. In fact, to determine the real outcome of random experiments (i.e. an index of the chosen element on the list of elements available at this step), we must take into account the  forbidden symbols from $k$ preceeding and $k$ following places on $S$. 
Every consecutive sequence of $t$ down-steps on $R$  correspond to erasure of some repeated block during the exectution of the Algorithm. Then we assign
to $s_{i},s_{i+d_{l}},\ldots,s_{i+(t-1)d_{l}}$ corresponding values $%
s_{i+ o_{l}t d_l},s_{i+d_{l}+o_{l}t d_l},\ldots ,s_{i+(t-1)d_{l}+o_{l}t d_l}$ (where $s_i$ is the first element of the ereased repeated block determined in the preparatory step). These are exactly the values from the repetitions erased at step (17) of the 
Algorithm. 
\end{proof}

We showed that each sequence of randomly chosen values during the execution of the Algorithms corresponds to some log, and
that this mapping is injective. This implies that the number of different
logs is always greater than or equal to the number of feasible sequence $r_{1},\ldots ,r_{M}$. Let $L$ be the size of the set of all possible
logs. To calculate $L$ we have to determine the number of different
structures for each element in a log. The number of all possible routes on
the upper right quadrant of a grid of length $2M$ with possible moves $(1,1)$
and $(1,-1)$ is well known to be the $M$th Catalan number $C_M$. Since in every choice in line (3) the elements occurring within the distance $k$ are excluded, the Algorithm can not produce repeated block of length $k$. It means that the subsequence $(1,1),(1,-1),(1,1)$ can not occur in the route $R$. Therefore the number of peaks within $R$ cannot exceed $M/2$.
Thus there can be at most $k^{M/2}$ possible sequences $D$. Respectively, there are at most $2^{M/2}$ possible evaluations for sequence $O$.

The sequence $S$ consists of $n$ elements of value between $0$ and $2 k+10 \sqrt{k}$,
 which gives us $(2 k+10 \sqrt{k})^{n}$ possible evaluations for
this sequence. For every fixed route $R$ with $m$ peaks corresponding to the repeated block of lengths $p_1, \ldots, p_{m}$ we have at most  $p_{1}p_{2}\ldots p_{n}$ sequences which can occurr as $P$.  Therefore for the upper bound for the number sequences $P$ we 
determine maximum value of the product $p_{1}p_{2}\ldots p_{n}$ with $%
p_{1}+\ldots +p_{n}=M$. The inequality between the arithmetic and geometric
means implies that the maximum is obtained when all $p_{i}$ are the same.
Denote their common value by $x$. Then we must determine $\max \left( x^{%
\frac{M}{x}}\right) $. Since

\begin{equation*}
\left(x^{\frac{M}{x}}\right)^{\prime}= x^{\frac{M}{x}}\left(\frac{M}{x^2}-%
\frac{M\log (x)}{x^2}\right),
\end{equation*}

\noindent we get that the maximum value is obtained with $x=e$ and equals $%
\approx 1.44467^{M}<1.5^{M}$.

\noindent All these bounds brings us to the conclusion that the number of
possible logs exceeds

\begin{equation*}
\left(2 k+10 \sqrt{k}\right)^{n}
C_M
k^{M/2}
2^{M/2}
\left(1.5\right)^{M}.
\end{equation*}

\noindent Comparing with the number of evaluations of a sequence $(r_{j})$
we get inequality

\begin{equation*}
(10 \sqrt{k})^M \leq
 \left(2 k+10 \sqrt{k}\right)^{n}
C_M
k^{M/2}
2^{M/2}
\left(1.5\right)^{M}.
\end{equation*}

\noindent Asymptotically, Catalan numbers grow as $C_{n}\sim \frac{4^{n}}{%
n^{3/2}\sqrt{\pi }}$ which implies that

\begin{equation*}
(10 \sqrt{k})^M \leq
\left(2 k+10 \sqrt{k}\right)^{n}
\frac{4^M }{M \sqrt{\pi M}}
k^{M/2}
2^{M/2}
\left(1.5\right)^{M}.
\end{equation*}

The right hand side is $o((10 \sqrt{k})^M )$ therefore for large enough $M$ the inequality can not hold. We get a contradiction, from which we conclude that for some specific choices of $r_1, r_2, \ldots$ the algorithm stops.
\end{proof}

The above proof can be applied in a more general setting.

\begin{theorem}
\label{Theorem 2}Let $K$ be a fixed set of $k$ positive integers. Then for every $n\geqslant 1$ and for any sequence of sets $%
L_{1},\ldots ,L_{n}$ of size at least $2k+10\sqrt{k}$ each, there exists a
sequence $S=s_{1}\ldots s_{n}$ with $s_{i}\in L_{i}$ for all $i=1,2,\ldots ,n$ which is nonrepetitive on every arithmetic progressions whose common difference is in $K$.
\end{theorem}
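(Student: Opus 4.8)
The plan is to repeat the argument proving Theorem~\ref{Theorem 1} almost verbatim, replacing the interval $\{1,2,\ldots,k\}$ by the arbitrary $k$-element set $K$ at every occurrence. The first change is in line~(3) of the Algorithm, where $s_i$ should now be chosen uniformly at random from $L_i\setminus\{s_{i-d},s_{i+d}:d\in K\}$. Because $|K|=k$, this forbidden set still has at most $2k$ elements, so a list of size $2k+10\sqrt{k}$ leaves at least $10\sqrt{k}$ admissible symbols at each step. Hence the first count is unchanged: there are at least $(10\sqrt{k})^M$ possible sequences $r_1,\ldots,r_M$ of random values.

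I would then verify the single structural property on which the peak estimate depends. For each $d\in K$, excluding $s_{i-d}$ and $s_{i+d}$ forces $s_i\neq s_{i\pm d}$, so no two consecutive terms of any arithmetic progression with common difference $d\in K$ are equal; therefore no repetition of size $h=1$ is ever formed, and every erased block has length at least $2$. This is exactly the fact used in Theorem~\ref{Theorem 1} to rule out the pattern $(1,1),(1,-1),(1,1)$ on the route $R$ and to bound the number of peaks by $M/2$. Consequently every ingredient of the second count survives: the Catalan number $C_M$ for $R$, the bound $2^{M/2}$ for $O$, the bound $1.5^M$ for $P$, and $(2k+10\sqrt{k})^n$ for $S$. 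The only clause to recheck is the bound on $D$, whose entries now record an element of $K$ rather than of $\{1,\ldots,k\}$; since $|K|=k$ there are still at most $k^{M/2}$ such sequences.

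The decoding in the Claim then transfers without alteration, and this is the step I would examine most carefully. Its reconstruction of the zeroed positions reads the common difference $d$ of each erased block directly from the sequence $D$ and locates $s_j,s_{j-d},s_{j-2d},\ldots$ accordingly; nowhere does it invoke $d\leqslant k$ or the fact that the difference set is an interval, only that each peak carries a recorded difference. Thus the mapping from random sequences to logs remains injective, the two counts yield the same inequality as in Theorem~\ref{Theorem 1}, and its right-hand side is again $o((10\sqrt{k})^M)$. The contradiction follows for large $M$, which proves the theorem. The main point of care is therefore not any new estimate but the confirmation that both the list-size computation in line~(3) and the decoding are insensitive to the shape of $K$, which holds precisely because only $|K|=k$ is used.
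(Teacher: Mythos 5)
Your proposal is correct and matches the paper's own treatment: the paper likewise proves Theorem~2 by observing that the argument for Theorem~1 uses only the cardinality of the difference set (so line~(3) still forbids at most $2k$ symbols and $D$ still admits at most $k^{M/2}$ evaluations), with the sole change that $D$ records elements of $K$ rather than of $\{1,\ldots,k\}$. Your additional checks --- that size-$1$ repetitions remain impossible so the peak bound $M/2$ survives, and that the decoding never uses $d\leqslant k$ --- are exactly the implicit verifications the paper's brief remark relies on.
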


Note that in the proof of the Theorem 1 we focused only on the number
of forbidden substructures, not their values. Given an arbitrary set of
common differences we order and numerate them from $1$ up to $k$. We can repeat the above reasoning with just one change -- the sequence $D$ consists of elements of $K$ (but there are stil $k$ of them).

\section{A related geometric problem}

As stated in the introduction, the problem of finding sequences
nonrepetitive up to $\bmod \ k$ has its origin in a geometric problem of nonrepetitive coloring of pints in the plane.
We can apply our proof technique to a more general question in this setting. The following problem
concerning nonrepetitive colorings of discrete sets of points in $\mathbb{R}%
^{n}$ was considered in \cite{Grytczuk Rainbow}. Let $P$ be a discrete set
of points and let $L$ be a fixed set of lines in $\mathbb{R}^{n}$. A
coloring of $P$ is \emph{nonrepetitive} (with respect to $L$) if each line in $L$ is colored nonrepetitively (i.e., no sequence
of consecutive points on any $l\in L$ forms a repetition). For a point $%
p\in P$ let $i(p)$ denote the number of lines from $L$ incident with $p$ and
let $I=I(P,L)=\max \{i(p):p\in P\}$ be the maximum incidence of the
configuration $(P,L)$. Using the Lov\'{a}sz Local Lemma it was proved in \cite{Grytczuk Rainbow} that $%
Ie^{(8I^{2}+8I-4)/(I-1)^{2}}$ colors are sufficient to get such a coloring.
Adopting the proof of Theorem 2 we can get a better bound.

\begin{theorem}
Let $(P,L)$ be a configuration of points and lines in $\mathbb{R}^{n}$ with
finite maximum incidence $I>2$. If $C\geqslant 2I+10\sqrt{I}$, then there is
a nonrepetitive $C$-coloring of $P$ with respect to $L$.
\end{theorem}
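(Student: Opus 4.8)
The plan is to transcribe the algorithmic argument of Theorem~\ref{Theorem 2} into the geometric language, with the maximum incidence $I$ playing the role formerly played by the number $k$ of admissible common differences. First I would fix an enumeration $p_{1},p_{2},\ldots$ of the point set $P$, assuming it finite; the infinite case follows at the end by a routine compactness argument, since every repetition involves only finitely many collinear points. On each line $l\in L$ the points of $P\cap l$ inherit a natural linear order, so each $p\in P$ has at most two immediate neighbors on each of the at most $I$ lines through it, hence at most $2I$ neighbors in total. The coloring step would then choose, uniformly at random, a color for the current point avoiding the (at most $2I$) colors of these neighbors; with $C\geqslant 2I+10\sqrt I$ this leaves at least $10\sqrt I$ admissible colors at every step, exactly as the list size $2k+10\sqrt k$ left $10\sqrt k$ choices before. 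Forbidding the neighbors guarantees that no line carries a repetition of size $1$, so every repeated block erased by the algorithm has length at least $2$.

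Next I would run the obvious analogue of Algorithm~\ref{alg}: color $p_{i}$, test whether the colored points are nonrepetitive on every line, and, if a repetition appears, erase the longest repeated block containing the point just colored and restart from the smallest uncolored index. For a run of $M$ coloring steps I would compress its history into a log $(R,D,O,P,S)$ just as before. Here $R$ is again a Dyck path of length $2M$ (an up-step per coloring, a down-step per erasure), bounded by the Catalan number $C_M$; $O$ is a sign sequence over the peaks ($2^{M/2}$ choices); $P$ records the erased block lengths, whose product is again maximized at $e$ per block and contributes at most $1.5^{M}$; and $S$ is the final coloring, contributing $C^{|P|}$. The one place requiring care is $D$: for each peak it must name the line on which the offending repetition occurred, and the decisive point is that this line is one of the at most $I$ lines through the already-reconstructed ``last colored'' point, so it can be recorded by a \emph{local} index in $\{1,\ldots,I\}$ rather than by an index into the possibly huge set $L$. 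This is precisely the analogue of recording the common difference $d\in\{1,\ldots,k\}$, and it keeps the number of admissible $D$ at most $I^{M/2}$.

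With the log defined, I would prove the injectivity claim exactly as in the Claim accompanying Theorem~\ref{Theorem 1}: a preparatory backward--forward scan of $R$, together with $D$ and $P$, reconstructs which points were colored and which were erased at each step, after which $O$ and $S$ recover each random color. Comparing the $(10\sqrt I)^{M}$ random sequences with the number of logs would yield
\[
(10\sqrt I)^{M}\ \leqslant\ C^{|P|}\,C_M\,I^{M/2}\,2^{M/2}\,(1.5)^{M}.
\]
Using $C_M\sim 4^{M}/(M^{3/2}\sqrt\pi)$, the exponential base on the right is $4\cdot\sqrt2\cdot 1.5\cdot\sqrt I=6\sqrt2\,\sqrt I\approx 8.49\sqrt I$, strictly below the left-hand base $10\sqrt I$; since $C^{|P|}$ is fixed and the polynomial factor only helps, the inequality fails for large $M$, a contradiction. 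Hence the algorithm halts and outputs the desired coloring.

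The main obstacle I anticipate is the decoding step, not the counting. In the arithmetic-progression setting the erased positions of a block are recovered arithmetically from the difference $d$ and the block length, because the spacing is uniform; for general lines the collinear points of $P$ are spaced irregularly, so the decoder must instead read the predecessors and successors off the fixed, globally known configuration $(P,L)$. Verifying that this reconstruction is unambiguous---that knowing the line, the length of the down-run, and the position of the last colored point along that line pins down exactly the erased points---is the crux, and it hinges on $(P,L)$ being fixed throughout. A secondary point to check is the passage from finite to infinite $P$, which I would settle by the de Bruijn--Erd\H{o}s compactness theorem once the finite case is established.
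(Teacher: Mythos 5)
Your proposal is correct and follows essentially the same route as the paper's own proof: the paper likewise reruns Algorithm~1 with lines in place of arithmetic progressions, forbids at most $2I$ colors per point (the neighbors on the at most $I$ incident lines), records the offending line in $D$ by a local index in $\{1,\ldots,I\}$, and repeats the log-counting verbatim. Your two added refinements --- the explicit check that the decoder recovers erased blocks from the fixed configuration $(P,L)$, and the compactness reduction for infinite $P$ --- are sound elaborations of details the paper leaves implicit rather than a different method.
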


\begin{proof}
The argument is pretty much the same as in the proof of Theorem 2. We
provide an algorithm for which each point is colored at random by one of $%
2I+10\sqrt{I}$ colors. Fix any linear ordering of all points in $P$. We
color them in this order using Algorithm~1, where arithmetic progressions
are changed into lines in $\mathbb{R}^{n}$. Similarly, for a given point $p \in P$ and every line $l \in L$ such that $p\in l$ we forbid to use colors already assigned to $I$ points preceding and following $p$ on $l$. This gives us at most $2I$ forbidden colors for each point. So, by analogy to the previous proof, one can show that additional $10\sqrt{I}$ colors suffice to get a nonrepetitive coloring of $P$ with respect to $L$. For a log $(R,D,O,P,S)$ we take the same objects as in last case, with the exception that now $D$ keeps the information about the line for which we get a repetition (values between $1$ and $I$), and $S$ is a sequence of numbers between $0$ and $I$. Then all calculations run similarly as before.
\end{proof}

\section{An open problem}

We would like to conclude the paper with a problem concerning infinite sets of forbidden differences. Let $K$ be a fixed (possibly infinite) set of positive integers. A coloring of the integers is $K$\emph{-nonrepetitive} if every arithmetic progression with common difference in $K$ forms a nonrepetitive sequence. Denote by $\pi (K)$ the minimum number of colors (possibly infinite) needed for a $K$-nonrepetitive coloring of $\mathbb{Z}$.

A natural question is for which sets $K$ the number $\pi (K)$ is finite. An obvious necessary condition is that the related integer distance graph (i.e., a graph on the set of vertices $\mathbb{Z}$ with two integers $a>b$ joined by an edge whenever their difference $a-b$ is in $K$) has finite chromatic number, denoted by $\chi (K)$. Theorem 2 shows that $\pi (K)$ is finite for finite sets $K$. More intriguing in this respect is the case of infinite sets $K$. We offer the following conjecture in the spirit of Erd\H{o}s.

\begin{conjecture}
$\pi (K)$ is finite for every lacunary set $K$.
\end{conjecture}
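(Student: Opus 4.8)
The plan is to follow the entropy-compression strategy behind Theorems~1 and~2, but to first tame the infinitude of $K$ by exploiting lacunarity. Write $K=\{k_{1}<k_{2}<\cdots\}$ with $k_{i+1}\geqslant qk_{i}$ for some fixed $q>1$, and group the differences into dyadic scales $K_{j}=K\cap[2^{j},2^{j+1})$. Lacunarity forces $|K_{j}|\leqslant 1+\log_{q}2$, a bound independent of $j$, and the number of elements of $K$ below a threshold $N$ is only $O(\log N)$; in particular the associated distance graph $G(K)$ has finite chromatic number $\chi(K)$, the obvious necessary condition noted above. The hope is that, although $K$ is infinite, its geometric sparsity lets one run a log argument in which the new component $D$ (which difference caused each erased repetition) is encoded by first specifying the scale $j$ and then the index within $K_{j}$, the latter costing only $O(1)$ per peak.

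Before that can work, however, one meets the genuine obstruction, and it is instructive to locate it precisely. In Theorems~1 and~2 the lower count $(|L_{i}|-2k)^{M}$ on the number of random strings survives because at each position only the $2k$ immediate neighbours are forbidden. For infinite $K$ the algorithm as written would forbid $s_{i\pm d}$ for every $d\in K$ below the window size, i.e. $O(\log N)\to\infty$ values, destroying the slack. Worse, a repetition of size $h$ on difference $d$ has match probability $C^{-h}$ \emph{independently of} $d$, so for every fixed $h$ the expected number of size-$h$ repetitions meeting a fixed position, summed over the infinitely many $d\in K$, diverges for any finite palette $C$. Thus independent uniform sampling cannot terminate, and lacunarity must be used not merely to count differences but to decorrelate the scales.

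Accordingly, the decisive step is to replace the i.i.d.\ colouring by a \emph{scale-structured} random process. I would bake a proper colouring of $G(K)$ into the sample space, so that size-$1$ repetitions $s_{i}=s_{i+d}$ are impossible by construction rather than merely improbable (this is where $\chi(K)<\infty$ enters and sets the base size of the palette), and then attach to each position a coordinate tuned to each dyadic scale, drawn from a fixed finite alphabet that is \emph{reused} across scales. The aim is that a repetition on a difference in $K_{j}$ forces a coincidence in the scale-$j$ coordinate that the construction precludes; lacunarity, through the gaps $k_{i+1}\geqslant qk_{i}$, is meant to guarantee that coordinates at different scales behave independently and that each $K_{j}$ contributes only $O(1)$ constraints, so the total palette can stay finite, of size $2\chi(K)+O(\sqrt{\chi(K)})$ in the best case.

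The hard part is twofold, and it is exactly what keeps the conjecture open. First, on the probabilistic side one must control repetitions whose two blocks straddle many scales, or whose difference sits between the scales the coordinates are tuned to; the conditioning introduced by enforcing properness and the cross-scale structure breaks the clean independence that the Moser--Tardos count relies on, and re-establishing a usable correlation decay across a lacunary family is delicate. Second, on the counting side one needs peaks at scale $j$ to be rare enough—geometrically in $j$—that the scale-encoding cost stays $2^{O(M)}$ even though active differences range up to magnitude $\sim M$ (so scales up to $\log_{2}M$ appear); a naive $O(j)$-bit code per peak only yields $M^{O(M)}$, which would swamp the lower count, so one must prove that large-scale erasures are correspondingly infrequent. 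Reconciling a finite palette with simultaneous Thue-type constraints over the whole lacunary family, and matching these two estimates, is the crux of the problem.
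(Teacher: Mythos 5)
There is nothing to compare your proposal against: the statement you were given is Conjecture~2 of the paper, which the authors explicitly pose as an \emph{open problem} in the spirit of Erd\H{o}s; the paper contains no proof of it, and neither does your text. To your credit, your write-up is honest about this --- it is a research plan, not a proof, and your diagnosis of why the method of Theorems~1 and~2 breaks down is essentially correct. For an infinite $K$ the greedy exclusion in line~(3) of the Algorithm would forbid unboundedly many symbols, killing the lower bound $(10\sqrt{k})^{M}$ on the number of random strings, and since a size-$h$ repetition on difference $d$ has probability $C^{-h}$ \emph{uniformly in} $d$, a fixed position lies in infinitely many bad events of non-decaying probability, so neither the Local Lemma nor the entropy-compression count applies as stated. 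Identifying that the difficulty is one of decorrelating scales, not of counting differences (lacunarity gives only $O(\log N)$ differences below $N$, and $O(1)$ per dyadic block), is a reasonable framing.

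But the two steps you defer are precisely the content of the conjecture, and as sketched they do not close. First, the ``scale-structured'' sample space is never constructed: baking a proper colouring of the distance graph $G(K)$ into the palette only eliminates repetitions of size $h=1$, while the hard cases are $h\geqslant 2$, where a repetition on $d\in K_{j}$ constrains $2h$ positions spread over length $2hd$ and interacts with \emph{every} scale below $j$; no mechanism is given by which a fixed finite alphabet reused across scales precludes these, and the claimed independence of coordinates across scales is exactly the unproved correlation-decay statement. Second, on the encoding side you correctly observe that an $O(j)$-bit cost per peak at scale $j$ yields $M^{O(M)}$ and swamps the count, but you supply no reason why the algorithm should erase large-scale repetitions with geometrically decreasing frequency --- nothing in the dynamics penalizes large $d$, since the erased block has the same length $h$ regardless of $d$. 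The palette bound $2\chi(K)+O(\sqrt{\chi(K)})$ is therefore speculation. In short: your proposal is a sensible map of the obstructions (consistent with the paper's remark that even the finiteness characterization of $\chi(K)$ touches deep questions in additive number theory, and with Carpi's ad hoc $4$-colour result for the odd integers), but it proves nothing, and the conjecture remains open.
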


A set $K=\{k_{1}<k_{2}<\ldots \}$ is \emph{lacunary} if there is a real number $\delta >0$ such that $\frac{k_{i+1}}{k_{i}}>1+\delta $ for all indices $i$. For instance the set of powers of $2$ and the set of Fibonacci numbers are lacunary. It is known that for such sets the usual chromatic number $\chi (K)$ is finite \cite{Katznelson}, \cite{PeresSchlag}, \cite{RuzsaTuzaVoigt}. However, there are non-lacunary sets with a finite chromatic number. Complete characterization of such sets is not known and, as pointed out by Ruzsa (personal communication), this problem is connected to some deep questions in additive number theory. A trivial example is the set of odd positive integers, whose chromatic number is $2$. Curiously, for the nonrepetitive variant just $4$ colors suffice as proved by Carpi \cite{Carpi}, which supports even stronger supposition that perhaps $\pi (K)$ is finite if and only if $\chi (K)$ is finite.

\end{document}